\def\R{\textrm{I\kern-0.21emR}}
\def\N{\textrm{I\kern-0.21emN}}
\renewcommand{\geq}{\geqslant}
\renewcommand{\leq}{\leqslant}
\renewcommand{\geq}{\geqslant}
\renewcommand{\leq}{\leqslant}
\newcommand {\Chi} {{\bf \raise 2pt \hbox{$\chi$}} }
\newcommand{\beq}{\begin{equation}}
\newcommand{\eeq}{\end{equation}}
\newcommand{\bea} {\begin{array}{rl}}
\newcommand{\eea} {\end{array}}
\newcommand{\bepa}{\left\{ \begin{array}{l}}
\newcommand{\eepa} {\end{array}\right.}
\newcommand{\bmu}{\begin{multline}}
\newcommand{\emu}{\end{multline}}
\newtheorem{theorem}{Theorem}
\theoremstyle{definition}
\title{On the stability of the state 1 in the non-local Fisher-KPP equation in bounded domains}
\author{
Camille Pouchol\thanks{Sorbonne Universit\'es, UPMC Univ Paris 06, CNRS UMR 7598, Laboratoire Jacques-Louis Lions, F-75005, Paris, France} \thanks{INRIA Team Mamba, INRIA Paris, 2 rue Simone Iff, CS 42112, 75589 Paris, France}  \thanks{e-mail: \href{mailto:pouchol@ljll.math.upmc.fr}{pouchol@ljll.math.upmc.fr}}
}
\date{}
\begin{document}
\maketitle
\pagestyle{plain}
\pagenumbering{arabic}

\begin{abstract} 
We consider the non-local Fisher-KPP equation on a bounded domain with Neumann boundary conditions. Thanks to a Lyapunov function, we prove that under a general hypothesis on the Kernel involved in the non-local term, the homogenous steady state $1$ is globally asymptotically stable. This assumption happens to be linked to some conditions given in the literature, which ensure that travelling waves link $0$ to $1$.
\end{abstract} 

\section{Introduction}
We consider the so-called non-local Fisher-KPP equation endowed with Neumann boundary conditons
\begin{equation}
\begin{split}
\label{Model}
\dfrac{\partial u}{\partial t} (t,x) & =\mu \left(1 - \int_\Omega K(x,y) u(t,y) \, dy\right) u(t,x) + \Delta u (t,x), \; \; x \in \Omega, \; t>0,  \\
\dfrac{\partial u}{\partial n} (t,x) & = 0,  \; \; x \in \partial \Omega, \; t>0,\\
u(0,x) &= u^0(x) \geq 0 \; \; x \in \Omega, 
\end{split}
\end{equation}
where $\Omega$ a regular bounded domain of $\mathbb{R}^d$ and $K > 0$ a Kernel modelling an additional death rate due to non-local interactions. \par
We will sometimes write in short $K[u] = \int_\Omega K(x,y) u(y) \,dy$ for a generic function $u$.

Assuming 
\begin{equation}
\label{SumToOne}
\forall y \in \Omega, \; \int_\Omega K(x,y) \, dx = 1,
\end{equation}
and in the limit 
$K(x,y) \rightarrow \delta_{x-y}$, we recover the classical Fisher KPP-equation 
\begin{equation}
\label{Classical}
\dfrac{\partial u}{\partial t} =\mu (1 - u) u + \Delta u.
\end{equation}
The assumption \eqref{SumToOne} ensures that $1$ remains a homogeneous stationary solution of \eqref{Model}. 

The classical Fisher-KPP equation \eqref{Classical} is often analysed on the whole space for the investigation of travelling waves, which are known to exist since the pioneering works of Fisher, Kolmogorov, Petrovsky and Piskunov~\cite{Kolmogorov1937} for any speed above $2\sqrt{\mu}$. Furthermore, any-non zero initial condition eventually converges locally uniformly to $1$, which is therefore a globally asymptotically stable for non zero initial conditions.  

When one adds a non-local term, it does not remain true that travelling waves exist and when they do, whether they link $0$ to $1$ or another non-homogeneous steady-state of the equation. $1$ can indeed become unstable: Türing patterns appear~\cite{Nadin2011, Perthame2015}. 

A natural question is thus to understand under which conditions the status of $1$ is changed due to the non-local term. When $K(x,y)$ is given by a convolution $\phi(x-y)$, several results have already been obtained in the full space, in dimension $d=1$. If the Fourier transform is everywhere positive of if $\mu$ is small enough, it is known that travelling waves necessarily connect $0$ to $1$~\cite{Berestycki2009}. See also~\cite{Alfaro2012, Hamel2014}

In this note, we provide a general result on the global asymptotic stability on $1$ on a bounded domain, based on a Lyapunov functional. The results holds provided that the following general assumption on the Kernel $K$ is satisfied: 
\begin{equation}
\label{Positive}
\forall f \in L^2(\Omega), \;  \int_{\Omega\times \Omega} K(x,y) f(x) f(y)\, dx \, dy \geq 0.
\end{equation}
$K$ is then referred to as being a positive Kernel, and \eqref{Positive} can be thought of as a strong competition assumption. 
These types of Lyapunov functionals have been used successfully in selection equations in~\cite{Jabin2011, Pouchol2017a, Pouchol2017} and are inspired by Lyapunov functions for Lotka-Volterra ODEs~\cite{Goh1977}.

It remains an open question to know whether this condition leads to the same conclusion on the whole space. As such, our Lyapunov function requires integrability for $u(t) - 1 - \ln(u(t))$ which is too much to ask in $\mathbb{R}^d$. We still believe that the condition \eqref{Positive} is highly relevant. Indeed, when $\Omega = \mathbb{R}^d$, and if $K$ is a convolution $K(x,y) =\phi(x-y)$, then condition \eqref{Positive} becomes 
\begin{equation}
\label{PositiveAgain}
\forall f \in L^2\big( \mathbb{R}^d\big), \;  \int_{ \mathbb{R}^d\times  \mathbb{R}^d} \phi(x-y) f(x) f(y)\, dx \, dy \geq 0.
\end{equation}

It is easy to check that if $\phi$ has a non-negative Fourier transform, then condition \label{PositiveAgain} is satisfied, see~\cite{Jabin2011}. 
The converse is almost true, as evidenced by Bochner's Theorem~\cite{Reed1975}: if $\phi$ is bounded an continuous, then \eqref{Positive} holds if and only if it is the Fourier transform of a finite bounded measure on $\mathbb{R}^d$. 

Consequently, condition \eqref{Positive} (or \eqref{PositiveAgain}) shows that the condition on the Fourier transform of $\phi$ used in dimension $1$ in the literature can be appropriate in any dimension, and may not only be a sufficient but also a necessary condition when it comes to the stability of the state $1$.

\section{The Lyapunov function approach}

We make the following regularity assumption on the Kernel $K$: 
\begin{equation}
\label{Regularity}
K \in C^{0,1}\big(\overline{\Omega}\times \overline{\Omega}\big), 
\end{equation}
where $C^{0,1}\big(\overline{\Omega}\times \overline{\Omega}\big)$ denotes the set of Lipschitz continuous functions on $\overline{\Omega}\times \overline{\Omega}$.

Under the previous assumption \eqref{Regularity}, for $u^0\in L^1(\Omega)$, we know from~\cite{Coville2013a} that there exists a unique non-negative classical solution in $C([0,+\infty), L^1(\Omega)) \cap C^1((0,+\infty), C^{2,\alpha} (\Omega))$, which we denote $t \longmapsto S_t u^0$.

It will also be convenient to introduce the space $Z := \{u \in C^{2,\alpha}(\Omega), \; u\geq 0\}$.
Finally, we define the non-negative function $H(w):= w-1-\ln(w)$ for $w>0$, and for $u$ in $Z$ 
\begin{equation}
V(u):= \int_\Omega \left(u(x)-1-\ln(u(x)\right) \, dx,
\end{equation}
the last integral possibly being equal to $+\infty$.

Our result is then the following: 
\smallskip
\begin{theorem}
Assume \eqref{Positive}, \eqref{Regularity}, \eqref{SumToOne}. Then for any initial datum $u^0$ in $L^1(\Omega)$, $u^0\geq 0$, $u^0 \neq 0$, the solution to \eqref{Model} satisfies 
\[u(t,\cdot) \longrightarrow 1\]
uniformly in $\Omega$.
\end{theorem}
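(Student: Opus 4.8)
The plan is to use $V$ as a Lyapunov functional and show it decreases strictly along trajectories, with the positivity condition \eqref{Positive} being exactly what forces monotonicity.

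The plan is to use $V$ as a strict Lyapunov functional and to conclude by a LaSalle-type invariance principle. The central step is to differentiate $V$ along a trajectory. For $t>0$ the solution lies in $C^{2,\alpha}(\Omega)$ and, by the strong maximum principle applied to a nonzero nonnegative datum, is strictly positive, so $V(u(t))$ is finite and differentiable with $\frac{d}{dt}V(u(t)) = \int_\Omega (1 - 1/u)\,\partial_t u\,dx$. I would split the right-hand side into a diffusion part and a reaction part. For the diffusion part, integrating by parts and using the Neumann condition kills the boundary term and leaves $-\int_\Omega |\nabla u|^2/u^2\,dx \le 0$. For the reaction part, the key observation is that, since $1$ is a steady state, $K[1]\equiv 1$, hence $1 - K[u] = -K[u-1]$; therefore
\[
\mu\int_\Omega (u-1)(1-K[u])\,dx = -\mu\int_{\Omega\times\Omega}K(x,y)(u(x)-1)(u(y)-1)\,dx\,dy,
\]
which is $\le 0$ exactly by the positivity assumption \eqref{Positive} applied to $f = u-1 \in L^2(\Omega)$. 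Putting the two pieces together,
\[
\frac{d}{dt}V(u(t)) = -\int_\Omega \frac{|\nabla u|^2}{u^2}\,dx - \mu\int_{\Omega\times\Omega}K(x,y)(u(x)-1)(u(y)-1)\,dx\,dy =: -D(t) \le 0,
\]
so $V$ is nonincreasing.

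Next I would identify when the dissipation $D$ vanishes, as this dictates the $\omega$-limit set. The first term forces $\nabla u \equiv 0$, i.e.\ $u \equiv c$ constant; inserting a constant into the second term gives $(c-1)^2\int_{\Omega\times\Omega}K = (c-1)^2|\Omega|$ (using $\int_\Omega K[1]\,dx = |\Omega|$), which vanishes only for $c = 1$. Thus $D=0$ if and only if $u \equiv 1$. Note that the two terms cooperate: the quadratic form in \eqref{Positive} may itself be degenerate, but once the gradient term has forced spatial homogeneity the nonlocal term becomes strictly definite.

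The remaining work is to convert monotonicity of $V$ into convergence through a compactness argument, and this is where the main technical obstacle lies. I would first establish uniform-in-time a priori bounds: testing the equation against $1$ and using $k_0 := \min_{\overline\Omega\times\overline\Omega} K > 0$ yields a logistic differential inequality for $\|u(t)\|_{L^1}$, giving an upper bound and, since $u^0\neq 0$, a positive lower bound on the mass for large times; parabolic regularity (Schauder estimates) then upgrades this to uniform $L^\infty$ bounds, to a uniform positive pointwise lower bound for $t\ge\tau>0$, and to relative compactness of the orbit $\{u(t): t\ge\tau\}$ in $C(\overline\Omega)$. Since $u(\tau)$ is then bounded above and below away from $0$ on the bounded set $\Omega$, one has $V(u(\tau))<\infty$, so I restart the analysis at $\tau$; then $V(u(t))$ decreases to a limit $V_\infty$ and $\int_\tau^\infty D(t)\,dt < \infty$.

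Finally I would invoke LaSalle: the $\omega$-limit set $\omega(u^0)$ is nonempty, compact and invariant, and $V \equiv V_\infty$ on it, so $D$ vanishes identically along any trajectory issued from a point of $\omega(u^0)$; by the characterization above this forces that point to equal $1$. Hence $\omega(u^0) = \{1\}$ and, by relative compactness of the orbit, $u(t) \to 1$ uniformly on $\overline\Omega$. I expect the genuinely delicate points to be the uniform positive lower bound on $u$ (needed to control $\ln u$ and to secure compactness) and the rigorous justification of the compactness and of the invariance principle in this infinite-dimensional, nonlocal setting, rather than the Lyapunov identity itself, which is short.
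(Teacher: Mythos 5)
Your proposal is correct and follows essentially the same route as the paper: the same Lyapunov functional $V$, the same dissipation identity obtained via integration by parts, $1-K[u]=K[1-u]$ from \eqref{SumToOne} and the sign condition \eqref{Positive}, and the same conclusion by compactness of the orbit in $C\big(\overline{\Omega}\big)$ plus an invariance argument (the paper proves your LaSalle step by hand, using the semigroup property and the continuity of $V$ to show $V$ is constant along the trajectory issued from a limit point, then that the dissipation terms vanish forces that limit to be $1$). The only difference is presentational: where you sketch a priori $L^1$ and Schauder bounds to secure compactness and positivity, the paper relies on the cited well-posedness result giving $u \in C^1((0,+\infty), C^{2,\alpha}(\Omega))$ together with the compact embedding into $C\big(\overline{\Omega}\big)$ and the Hopf boundary point principle.
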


\begin{proof}
\textit{First step: computation of the Lyapunov functional.}

First, let us remark that by the parabolic strong maximum principle, $u(t,x)>0$ for all $t>0$, $x\in \Omega$. Now, let us check that this holds true also for $x\in \partial \Omega$, from which we will infer that $V(u(t))$ is well defined for all $t>0$. By the parabolic strong maximum principle at the boundary, we have the following alternative for $x \in \partial \Omega$: either $u(t,x) > 0$ or $u(t,x) = 0$ and $\dfrac{\partial u}{\partial n} (t,x) < 0$. Only $u(t,x) > 0$ can hold due to the Neumann boundary conditions. 

We now consider $g(t):= V(u(t))$ for $t>0$, where $\{u(t)\}_{t\geq0}$ is the trajectory given rise to by $u_0$. Let us prove that this is a Lyapunov functional, by computing for $t>0$
\begin{align*}
g'(t)& = \int_\Omega \dfrac{\partial u}{\partial t}(t) \left(1- \frac{1}{u(t)}\right) \\
			 & = \int_\Omega \Delta u (t)  \left(1- \frac{1}{u(t)}\right) - \mu \int_{\Omega}  (1- K[u(t)]) \left(1-u(t))\right) \\ 
			 & = - \int_\Omega \frac{\left| \nabla{\left(u(t,x)\right)}\right|^2}{u^2(t,x)}  \, dx
 -\mu \int_{\Omega^2} K(x,y) \left(1-u(t,x))\right) \left(1-u(t,y)\right) \, dx \, dy.
\end{align*}
after integration by part for the first term. For the second one, we used $1-K[u] = K[1-u]$, owing to \eqref{SumToOne}.

Thanks to \eqref{Positive}, this yields $g'(t) \leq 0$ \textit{i.e.}, that $g$ is non-increasing over the real line. 
Since $g\geq 0$, we infer the convergence of $g(t)$ as $t$ tends to $+\infty$, and we denote $l$ its limit. \par

\textit{Second step: compactness of trajectories.}

Since $C^{2,\alpha}(\Omega)$ is compactly embedded into $C\big(\overline{\Omega}\big)$, the trajectory $\{S_t u^0\}_{t\geq0}$ is relatively compact in $C(\overline{\Omega})$, meaning that one can find $\bar u \geq 0$ in $C(\overline{\Omega})$ and a sequence $(t_k)$ tending to $+\infty$ in $k$, such that $u(t_k)$ converges to $\bar u$ as $k$ goes to $+\infty$, in $C\big(\overline{\Omega}\big)$. Note that the limit cannot be identically $0$ since otherwise $g(t)$ would go to $+\infty$, in contradiction with its convergence to $l$. 

Our aim is to prove that $\bar u = 1$, which will mean that the whole trajectory converges to $\bar u$, hence the expected result.

\textit{Third step: identifying the limit.}

Let us now consider the trajectory starting from the initial datum $\bar u$, namely $\{S_t\bar u\}_{t\geq0}$, which we also denote by $\{\tilde{u}(t)\}_{t\geq0}$. Because $\bar u \geq 0$, $\bar u \neq 0$, we again have $\tilde{u}(t,x)>0$ for all $t>0$, $x\in \overline{\Omega}$. Let us prove that $V$ is constant along the trajectory $\{S_t\bar u\}_{t\geq0}$ for $t>0$.  \par 
For this, we write $V(\tilde{u}(t)) = V(S_t \bar u) = V\left(S_t \lim_{k \rightarrow +\infty} S_{t_k} u^0\right) = V\left( \lim_{k \rightarrow +\infty} S_{t+t_k} u^0\right)$. It is also easy to see that for any $u$ in $C\big(\overline{\Omega}\big)$ which is furthermore positive on $\overline{\Omega}$, $V$ (seen as acting on $C\big(\overline{\Omega}\big)$) is continuous at $u$, and this implies $V(\tilde{u}(t)) = \lim_{k \rightarrow +\infty} V\big(S_{t+t_k} u^0\big) = l$.
As claimed the function $t\longmapsto V(S_t \bar u)$ is constant (equal to $l$) for $t>0$. 

Hence its derivative must be zero for $t>0$: from the computations made in the first step, it must hold that both $\int_\Omega \frac{\left| \nabla{\left(\tilde{u}(t)\right)}\right|^2}{\tilde{u}^2(t)}$ 
and $\int_{\Omega^2} K(x,y) \left(\tilde{u}(t,x) - 1)\right) \left(\tilde{u}(t,y) - 1\right) \, dx \, dy$ vanish identically for $t>0$. Let us now fix $t>0$, and from the first term, we know that $\tilde{u}(t)$ is a constant. From the second term and owing to $K>0$, this constant must be equal to $1$. By continuity of the trajectory, this also holds true at $t=0$, \textit{i.e.}, $\bar u = 1$, which ends the proof. 

\end{proof}

{
\bibliography{BibliographyNoteKPP}

\begin{thebibliography}{10}

\bibitem{Alfaro2012}
{\sc Alfaro, M., and Coville, J.}
\newblock Rapid traveling waves in the nonlocal {F}isher equation connect two
  unstable states.
\newblock {\em Applied Mathematics Letters 25}, 12 (2012), 2095--2099.

\bibitem{Berestycki2009}
{\sc Berestycki, H., Nadin, G., Perthame, B., and Ryzhik, L.}
\newblock The non-local {F}isher-{K}{P}{P} equation: travelling waves and
  steady states.
\newblock {\em Nonlinearity 22}, 12 (2009), 2813.

\bibitem{Coville2013a}
{\sc Coville, J.}
\newblock Convergence to equilibrium for positive solutions of some
  mutation-selection model.
\newblock {\em Preprint arXiv:1308.6471\/} (2013).

\bibitem{Goh1977}
{\sc Goh, B.~S.}
\newblock {G}lobal stability in many-species systems.
\newblock {\em {A}merican {N}aturalist\/} (1977), 135--143.

\bibitem{Hamel2014}
{\sc Hamel, F., and Ryzhik, L.}
\newblock On the nonlocal {F}isher-{K}{P}{P} equation: steady states, spreading
  speed and global bounds.
\newblock {\em Nonlinearity 27}, 11 (2014), 2735.

\bibitem{Jabin2011}
{\sc Jabin, P.-E., and Raoul, G.}
\newblock {O}n selection dynamics for competitive interactions.
\newblock {\em {J}ournal of {M}athematical {B}iology 63}, 3 (2011), 493--517.

\bibitem{Kolmogorov1937}
{\sc Kolmogorov, A.~N.}
\newblock {\'E}tude de l'{\'e}quation de la diffusion avec croissance de la
  quantit{\'e} de mati{\`e}re et son application {\`a} un probl{\`e}me
  biologique.
\newblock {\em Bull Univ {\'E}tat Moscou S{\'e}r Int A 1\/} (1937), 1--26.

\bibitem{Nadin2011}
{\sc Nadin, G., Perthame, B., and Tang, M.}
\newblock Can a traveling wave connect two unstable states? {T}he case of the
  nonlocal {F}isher equation.
\newblock {\em Comptes Rendus Mathematique 349}, 9-10 (2011), 553--557.

\bibitem{Perthame2015}
{\sc Perthame, B.}
\newblock Parabolic equations in biology.
\newblock Springer, 2015, pp.~1--21.

\bibitem{Pouchol2017a}
{\sc Pouchol, C., Clairambault, J., Lorz, A., and Tr{\'e}lat, E.}
\newblock Asymptotic analysis and optimal control of an integro-differential
  system modelling healthy and cancer cells exposed to chemotherapy.
\newblock {\em {J}ournal de {M}ath\'ematical {P}ures et {A}ppliqu\'ees\/}
  (2017).

\bibitem{Pouchol2017}
{\sc Pouchol, C., and Tr{\'e}lat, E.}
\newblock Global stability with selection in integro-differential
  {L}otka-{V}olterra systems modelling trait-structured populations.
\newblock {\em arXiv preprint arXiv:1702.06187\/} (2017).

\bibitem{Reed1975}
{\sc Reed, M., and Simon, B.}
\newblock Methods of {M}odern {M}athematical {P}hysics, vol {I}{I}.
\newblock {\em Academic Press\/} (1975).

\end{thebibliography}
\bibliographystyle{acm}}

\end{document}